\def\A{\mathcal{A}}
\def\B{\mathcal{B}}
\def\C{\mathcal{C}}
\def\D{\mathcal{D}}
\def\F{\mathcal{F}}
\pgfmathsetmacro\Rad{3.8}
\pgfmathsetmacro\rad{3}
\pgfmathsetmacro\wid{.3}
\pgfmathsetmacro\smal{\rad-\wid}
\pgfmathsetmacro\larg{\rad+\wid}
\newtheorem{theorem}{Theorem}
\theoremstyle{definition}
\definecolor{gren}{RGB}{  0,140, 10}
\title{A simple proof of Talbot's theorem for intersecting separated sets}
\date{}
\author{Peter Borg\thanks{Department of Mathematics, Faculty of Science, University of Malta, Malta, email: \texttt{peter.borg@um.edu.mt}} \quad 
Carl Feghali\thanks{Computer Science Institute of Charles University, Prague, Czech Republic, email: \texttt{feghali.carl@gmail.com} }
}
\begin{document}   
\maketitle

\begin{abstract}

A subset $A$ of $[n] = \{1, \dots, n\}$ is \emph{$k$-separated} if, when the elements of $[n]$ are considered on a circle, between any two elements of $A$ there are at least $k$ elements of $[n]$ that are not in $A$. A family $\A$ of sets is \emph{intersecting} if every two sets in $\A$ intersect. We give a short and simple proof of a remarkable result of Talbot (2003), stating that if $n \geq (k + 1)r$ and $\A$ is an intersecting family of $k$-separated $r$-element subsets of $[n]$, then $|\A| \leq \binom{n - kr - 1}{r - 1}$. This bound is best possible.
 \end{abstract}

Let $[n]$ denote the standard $n$-element set $\{1, \dots, n\}$, and let $[n]^{(r)}$ denote the family of $r$-element subsets of $[n]$. Let $2^{[n]}$ denote the family of subsets of $[n]$, that is, $2^{[n]} = \bigcup_{r=0}^n [n]^{(r)}$. A family $\A \subseteq 2^{[n]}$ is said to be \emph{intersecting} if $A \cap B \neq \emptyset$ for every $A, B \in \A$. 

How large can an intersecting family $\A \subseteq [n]^{(r)}$ be? If $2r > n$, then any two sets in $[n]^{(r)}$ intersect, so $\A$ can be $[n]^{(r)}$ itself. 
The case $2r \leq n$ is much more difficult, and the solution is given by one of the most important theorems in extremal set theory, the Erd\H{o}s--Ko--Rado Theorem \cite{erdos}.

\begin{theorem}[Erd\H{o}s, Ko, Rado \cite{erdos}]\label{thm:ekr}
If $n$ and $r$ are positive integers, $n \geq 2r$, and $\A$ is an intersecting subfamily of $[n]^{(r)}$, 
then 
\[ |\A| \leq \binom{n - 1}{r - 1}, \]
and equality holds if, for some $x \in [n]$, $\A = \{A \in [n]^{(r)} \colon x \in A\}$. 
\end{theorem}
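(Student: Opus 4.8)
The plan is to use Katona's circle method, which is perfectly suited to this setting. Fix a \emph{cyclic ordering} of $[n]$, i.e.\ a placement of the $n$ elements around a circle, and call a set of $r$ consecutive elements an \emph{arc}; each cyclic ordering has exactly $n$ arcs. The two ingredients are: (i) a local bound showing that within any single cyclic ordering at most $r$ of the $n$ arcs can belong to an intersecting family; and (ii) a double-counting argument over all cyclic orderings that converts this local bound into the global bound $\binom{n-1}{r-1}$.

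For the local bound, which I expect to be the crux, I would argue as follows. Fix a cyclic ordering and let $\F$ be the set of arcs lying in $\A$; assume $\F \neq \emptyset$ and pick $A \in \F$, labelling the arcs $A_1, \dots, A_n$ by their starting position with $A = A_1 = \{1, \dots, r\}$. Because $n \geq 2r$, the arcs $A_j$ and $A_{j+r}$ are disjoint for every $j$. The arcs meeting $A_1$ are exactly those with starting position in $\{n-r+2, \dots, n\} \cup \{1, \dots, r\}$, which is $2r-1$ arcs; discarding $A_1$ itself leaves $2(r-1)$ arcs, which I would partition into the $r-1$ pairs $\{A_{1+s}, A_{n-r+1+s}\}$ for $s = 1, \dots, r-1$. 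A short check using $n \geq 2r$ shows that the two arcs in each pair are disjoint. Since $\A$ is intersecting, $\F$ contains at most one arc from each pair, together with $A_1$, so $|\F| \leq (r-1) + 1 = r$.

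To globalize, I would double-count the pairs $(A, \sigma)$ where $\sigma$ is a cyclic ordering and $A \in \A$ is an arc of $\sigma$. There are $(n-1)!$ cyclic orderings, and for each the local bound contributes at most $r$ sets, so the total is at most $r(n-1)!$. On the other hand, a fixed $r$-set $A$ forms an arc in exactly $r!\,(n-r)!$ cyclic orderings, since one may order the block of $A$ internally in $r!$ ways and then arrange the resulting $n-r+1$ objects cyclically in $(n-r)!$ ways. Equating the two counts gives $|\A|\, r!\,(n-r)! \leq r(n-1)!$, which simplifies to $|\A| \leq \binom{n-1}{r-1}$. Finally, for the equality clause it suffices to observe that a star $\{A \in [n]^{(r)} \colon x \in A\}$ is visibly intersecting and has exactly $\binom{n-1}{r-1}$ members, matching the bound.
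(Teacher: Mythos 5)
Your proof is correct, but there is an important mismatch of expectations to flag: the paper does not prove Theorem~\ref{thm:ekr} at all. It quotes the Erd\H{o}s--Ko--Rado theorem from \cite{erdos} as known background (the paper's own contribution is the $k$-separated analogue, Theorem~\ref{thm:main}, proved by a new shifting argument), and Katona's circle proof --- which is exactly the argument you give --- is listed among the alternative proofs it cites, e.g.\ \cite{katonaekr}. So there is no paper proof to compare against; what you have reconstructed is Katona's classical double-counting proof, and it checks out in every detail. The local bound is sound: with $A_1 = \{1, \dots, r\}$, the arcs meeting $A_1$ start at positions in $\{n-r+2, \dots, n\} \cup \{1, \dots, r\}$, and these $2r-1$ positions are distinct precisely because $n \geq 2r$; each of your pairs $\{A_{1+s}, A_{n-r+1+s}\}$ consists of arcs whose starting positions differ by $n-r \equiv -r \pmod{n}$, so their disjointness is exactly the instance $j = n-r+1+s$ of your opening observation that $A_j$ and $A_{j+r}$ are disjoint when $n \geq 2r$, giving at most $1 + (r-1) = r$ arcs of $\sigma$ in $\A$ (and trivially $0$ when $\F = \emptyset$, a case you correctly set aside). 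The count $r!\,(n-r)!$ of cyclic orderings in which a fixed $r$-set forms an arc is right, and $r(n-1)!\,/\,(r!\,(n-r)!) = \binom{n-1}{r-1}$ as claimed. Finally, note that the equality clause in the statement only asserts that stars \emph{attain} the bound (it is an ``if,'' not a characterization of extremal families), so your closing observation that a star is intersecting with exactly $\binom{n-1}{r-1}$ members is all that is required. If you wanted a proof stylistically closer to the rest of the paper, a compression/shifting argument in the spirit of the proof of Theorem~\ref{thm:main} would be the natural alternative, but the circle method is shorter and entirely adequate here.
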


Many new proofs (see, for example, \cite{daykin, franklfuredi, katonaint, katonaekr}), extensions and generalizations of Theorem~\ref{thm:ekr} have appeared; we refer to \cite{deza, franklshifting, franklex, franklsurvey,  gerbner} for further details. One way of extending Theorem~\ref{thm:ekr} is by adding restrictions to the family under consideration. A subset $A = \{a_1, \dots , a_r\}$ of $[n]$ is said to be \emph{$k$-separated} if $a_{i+1} > a_i + k$ for each $i \in [r]$, where $a_{r+1} = a_1 + n$. The family of $k$-separated $r$-element subsets of $[n]$ is denoted by $[n]_k^{(r)}$. In a remarkable paper \cite{talbotcycle}, Talbot obtained the following analogue of Theorem \ref{thm:ekr} for $k$-separated sets, resolving a conjecture of Holroyd and Johnson \cite{holroyd1, holroyd2}.

\begin{theorem}\label{thm:main}
If $n$, $k$ and $r$ are positive integers, $n \geq (k + 1)r$, and $\A$ is an intersecting subfamily of $[n]^{(r)}_k$, then 
\[|\A| \leq \binom{n - kr - 1}{r - 1},\]
and the upper bound is attained if, for some $x \in [n]$, $\A = \{A \in [n]^{(r)}_k \colon x \in A\}$. 
\end{theorem}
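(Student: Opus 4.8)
The plan is to prove Theorem~\ref{thm:main} by Katona's cycle method, exploiting the fact that the extremal star has density exactly $r/n$ among all $k$-separated $r$-sets. The starting point is a counting identity that I would verify first: the number of $k$-separated $r$-subsets of the circle $[n]$ equals $\frac{n}{r}\binom{n-kr-1}{r-1}$, and, by rotational symmetry, every point of $[n]$ lies in a fraction $r/n$ of them. Hence a star $\{A \in [n]^{(r)}_k \colon x \in A\}$ has exactly $\frac{r}{n}\cdot\frac{n}{r}\binom{n-kr-1}{r-1}=\binom{n-kr-1}{r-1}$ members, so it suffices to show that an intersecting family can occupy at most the fraction $r/n$ of all $k$-separated $r$-sets.

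To this end I would introduce the notion of a \emph{separated cyclic ordering} of $[n]$: a cyclic arrangement $(c_1,\dots,c_n)$ of the points of $[n]$ in which every window $\{c_i,c_{i+1},\dots,c_{i+r-1}\}$ of $r$ cyclically consecutive entries (indices mod $n$) is a $k$-separated set. The purpose of this definition is that, within a single such ordering, the $n$ windows are genuine \emph{arcs} of a length-$n$ cycle, so the classical Katona arc lemma applies: since $n \ge (k+1)r \ge 2r$, any intersecting subfamily of these $n$ arcs has at most $r$ members. Thus $\A$ contains at most $r$ windows of each separated cyclic ordering.

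The proof would then be a double count of the incidences between $\A$ and the set $\mathcal{D}$ of all separated cyclic orderings, an incidence being a pair (ordering, window) whose window lies in $\A$. The arc lemma bounds this count above by $r|\mathcal{D}|$. On the other hand, every window occurring in an ordering of $\mathcal{D}$ is a $k$-separated $r$-set, and there are $n|\mathcal{D}|$ window-incidences in total. \emph{Provided} each $k$-separated $r$-set occurs as a window in the same number $\mu$ of orderings of $\mathcal{D}$, the incidences landing in $\A$ equal $\mu|\A|$ while all incidences equal $\mu\,|[n]^{(r)}_k| = n|\mathcal{D}|$. Combining, $\mu|\A|\le r|\mathcal{D}| = \frac{r}{n}\,\mu\,|[n]^{(r)}_k|$, which cancels to $|\A|\le \frac{r}{n}|[n]^{(r)}_k| = \binom{n-kr-1}{r-1}$, the bound being attained by the star as asserted.

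The hard part is precisely this \emph{uniform occurrence} property. It does not follow from rotational symmetry alone, since rotation of $[n]$ does not act transitively on the $k$-separated $r$-sets: for example $\{1,3,5\}$ and $\{1,3,6\}$ lie in different rotation orbits. The main work, therefore, is to organize $\mathcal{D}$ so that uniformity holds and so that its windows genuinely exhaust all of $[n]^{(r)}_k$ — for instance by symmetrizing over a transitive action that permutes separated cyclic orderings while carrying any prescribed separated set to any other, or by assembling $\mathcal{D}$ from explicit interleaving templates and computing the occurrence count directly. I expect this to be the only real obstacle; once uniform occurrence and nonemptiness of $\mathcal{D}$ are established, the arc lemma together with the density identity closes the argument at once.
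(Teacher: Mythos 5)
Your framing is internally coherent up to the point you yourself flag: the identity $|[n]^{(r)}_k|=\frac{n}{r}\binom{n-kr-1}{r-1}$ is correct, the star does have size $\binom{n-kr-1}{r-1}$, and windows of a separated cyclic ordering intersect as sets exactly when their index arcs meet, so Katona's arc lemma (applicable since $n\geq (k+1)r\geq 2r$) does cap $\A$ at $r$ windows per ordering. But the uniform-occurrence property you defer is not a verification to be carried out later --- it is the entire theorem, and for your actual choice of $\mathcal{D}$ (\emph{all} separated cyclic orderings) it is false. Take $n=8$, $r=3$, $k=1$. Encoding an ordering by its step sequence, the allowed steps are $\pm 2,\pm 3, 4 \pmod 8$ and two consecutive steps may not sum to $0,\pm 1 \pmod 8$; a short case analysis then shows the valid orderings are exactly the step-$3$ ordering $(0,3,6,1,4,7,2,5)$ and its reverse, whose $8$ windows are precisely the $8$ sets with gap pattern $(2,3,3)$, together with the $8$ translates of $(0,4,6,1,3,7,5,2)$, each of which has $4$ windows of gap pattern $(2,2,4)$ and $4$ of pattern $(2,3,3)$. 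Hence every $(2,3,3)$-set occurs in $6$ orderings while every $(2,2,4)$-set occurs in only $4$. Your first proposed repair, symmetrizing over a transitive action, cannot exist: a permutation of $[n]$ carrying separated cyclic orderings to separated cyclic orderings carries windows to windows, hence preserves $[n]^{(r)}_k$ and with it the relation ``circular distance at most $k$''; apart from degenerate small cases such permutations are only the rotations and reflections, whose orbits on $[n]^{(r)}_k$ are exactly the distinct gap-pattern classes you already noticed. Your second repair --- assembling a (possibly weighted) subfamily of orderings with constant occurrence --- is the open difficulty restated, not a method: it happens to succeed in the toy case above (discard the two step-$3$/step-$5$ orderings and the remaining $8$ cover every set exactly $4$ times), but no general construction is given, and none is known; this is precisely the obstruction that kept the Holroyd--Johnson conjecture open and forced Talbot to invent his compression technique. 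Note also that $\mathcal{D}=\emptyset$ when $n=(k+1)r$ and $r\geq 2$, since consecutive windows share $r-1$ elements while the $k+1$ members of $[n]^{(r)}_k$ are then pairwise disjoint; so nonemptiness and exhaustion of $\mathcal{D}$ are themselves claims needing proof, not background facts.

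By contrast, the paper needs no global design at all: it inducts on $n$ via the single one-step shift $f(A)=(A\setminus\{n\})\cup\{n-1\}$, applied when $n\in A$ and $n-k-1\notin A$, then splits the shifted family $\A^*$ into $\C^*$ (the sets containing $n$, together with those containing a pair $\{i,\,n-k-1+i\}$ for some $i\in[k]$), whose size is controlled through the intersecting family $\C\subseteq[n-k-1]^{(r-1)}_k$ by $\binom{n-kr-2}{r-2}$, and the remainder $\D\subseteq[n-1]^{(r)}_k$, controlled by $\binom{n-kr-2}{r-1}$, closing with Pascal's rule. If you wish to pursue your plan, the honest formulation of the missing step is a design problem: for every $n>(k+1)r$, exhibit nonnegative weights on separated cyclic orderings under which each member of $[n]^{(r)}_k$ receives the same positive total weight of occurrences. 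Until that is established, your argument is a reduction of Talbot's theorem to an unproved (and historically resistant) combinatorial statement, not a proof.
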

Note that $[n]^{(r)}_k$ is empty if $n < (k+1)r$.

To prove Theorem \ref{thm:main}, Talbot introduced a shifting technique which, roughly speaking,  rotates anticlockwise the elements of the sets of
the intersecting family which are distinct from a specified element.The conjecture had attracted much interest, especially when it started to appear unlikely that an easy proof exists; see \cite{hiltontalbot} for a discussion. 

The purpose of this paper is to show that, surprisingly, a rather simple proof of Theorem \ref{thm:main} does exist. It also turns out that, in the same way that Hilton and Spencer \cite{hiltontalbot} obtained a generalization of Theorem \ref{thm:main} via an application of Talbot's  aforementioned shifting technique, one may also apply our shifting argument  instead and again significantly shorten and simplify their proof; we leave the details to the reader.  

For clarity of presentation, as in \cite{talbotcycle}, we first consider the case $k = 1$, and then we address the general case $k \geq 1$ by applying essentially the same argument. 

A $1$-separated subset $A$ of $[n]$ is also simply called a \emph{separated} set. Note that $A$ is separated if no two elements of $A$ are adjacent. (Phrasing this differently again, $A$ is separated if it is an independent set of the cycle $C_n$ whose vertices are labelled $1$ to $n$ clockwise.)

\begin{theorem}
If $n$ and $r$ are positive integers, $n \geq 2r$, and $\A$ is an intersecting subfamily of $[n]_1^{(r)}$, then 
\[|\A| \leq \binom{n - r - 1}{r - 1},\]
and the upper bound is attained if, for some $x \in [n]$, $\A = \{A \in [n]_1^{(r)} \colon x \in A\}$. 
\end{theorem}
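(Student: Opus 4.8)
The plan is to use a Frankl-style compression adapted to the cyclic setting rather than a direct bijection. The first observation guiding everything is that the target bound $\binom{n-r-1}{r-1}$ is exactly the Erd\H{o}s--Ko--Rado bound of Theorem~\ref{thm:ekr} for the parameters $(n-r,r)$, and that it equals the number of separated $r$-sets through a fixed point of the $n$-cycle: conditioning on a point $x$, deleting $x$ and its two neighbours leaves a separated $(r-1)$-set on a path of $n-3$ vertices, of which there are $\binom{n-r-1}{r-1}$. This makes the extremal ``star'' transparent and tells me that the whole game is to push an arbitrary intersecting family toward such a star. I would first record that the naive gap-closing map $a_i \mapsto a_i-(i-1)$, which turns separated sets into ordinary sets, is not rank-uniform across different sets and so destroys intersections; this is precisely why a bijective reduction to Theorem~\ref{thm:ekr} cannot be used off the shelf, and why a shift is needed instead.

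Next I would define, for a fixed reference point on the cycle, a sliding shift $\sigma$ that moves each element of a set one step anticlockwise toward that reference whenever doing so keeps the set $1$-separated and avoids a collision, using the standard shifting bookkeeping $\sigma(\A) = \{\sigma(A) \colon A \in \A\} \cup \{A \in \A \colon \sigma(A) \in \A\}$ to keep the cardinality fixed. Three things then have to be checked: that $\sigma(A)$ is again a separated $r$-set (true by construction of when the shift fires), that $|\sigma(\A)| = |\A|$ (the usual two-to-at-most-one argument), and, crucially, that $\sigma(\A)$ is still intersecting. Iterating all such shifts until the family stabilises, I would aim to show that a stable intersecting family is forced to have a common element, at which point $|\A| \le \binom{n-r-1}{r-1}$ follows immediately from the count above.

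To organise the endgame I would run an induction on $n$ through the decomposition of $\A$ into the sets avoiding the reference point and the sets containing it, which mirrors the Pascal-type identity
\[ \binom{n-r-1}{r-1} = \binom{n-r-2}{r-1} + \binom{n-r-2}{r-2}, \]
that is, $f(n,r) = f(n-1,r) + f(n-2,r-1)$ for $f(n,r) = \binom{n-r-1}{r-1}$: the avoiding part should be controlled by a separated problem on one fewer point, and the containing part by a separated $(r-1)$-problem on two fewer points. The base case $n = 2r$ is immediate, since $[2r]^{(r)}_1$ consists only of the two alternating sets $\{1,3,\dots\}$ and $\{2,4,\dots\}$, which are disjoint, forcing $|\A| \le 1 = \binom{r-1}{r-1}$.

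I expect the main obstacle to be the interaction between the separation constraint and the intersecting property under the shift. In ordinary shifting the adjacent transposition shift provably preserves intersections, but here the shift is only allowed to fire when it respects $1$-separation, and the cyclic wraparound means that two sets meeting only near the ``seam'' can have their shared element displaced asymmetrically; ruling this out is a delicate case analysis that is the heart of the argument. A secondary technical point is that conditioning on avoiding the reference point turns the $n$-cycle into a path, so the induction must be framed in a way that simultaneously accommodates the cyclic problem and the path-like subproblem that it spawns.
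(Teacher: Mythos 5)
There is a genuine gap, and it sits exactly at your endgame. You propose iterating anticlockwise shifts until the family stabilises and then asserting that a stable intersecting family must have a common element; that assertion is both unproven and false in spirit. Even in the classical Erd\H{o}s--Ko--Rado setting, shift-stable intersecting families need not be stars: $\{A \in [n]^{(r)} \colon |A \cap [3]| \geq 2\}$ is invariant under all left-shifts and intersecting, yet has no common element, and nothing about the cyclic, separated setting removes this obstruction. Moreover, the shift you describe---rotate every element of a set toward a reference point whenever separation permits---is essentially Talbot's original compression technique, which is precisely what the paper set out to bypass, because controlling its stabilised families is the long case analysis you yourself flag as ``delicate'' and then leave entirely open. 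The paper instead uses a \emph{single, local, one-step} shift at the seam: $f(A) = (A \setminus \{n\}) \cup \{n-1\}$ when $n \in A$ and $n-2 \notin A$, and $f(A) = A$ otherwise, with no iteration and no structural claim about stable families. The sole purpose of $f$ is to make the link family intersecting: if $A, B \in \A^*(n)$ were disjoint, then without loss of generality $n-2 \notin A$, and the definition of $\A^*$ forces both $A \cup \{n-1\}$ and $B \cup \{n\}$ to lie in the original family $\A$, a pair of disjoint sets---a three-line contradiction replacing the case analysis your plan would require.

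Your decomposition also has a concrete hole. Deleting the reference point $n$ from the cycle makes its neighbours $1$ and $n-1$ adjacent, so a set of $\A$ containing both $1$ and $n-1$ is separated in $[n]$ but not in the smaller cycle $[n-1]$; it fits into neither part of your split. You propose escaping to a path, but that changes the statement being inducted on: intersecting separated families on a path have a different extremal function, so you would need to formulate and prove a separate path lemma, which the proposal does not do. The paper stays entirely cyclic by introducing the correction family $\B = \{B \in \A^* \colon 1, n-1 \in B\}$, shipping it to the $(r-1)$-problem as $\B(n-1)$ alongside $\A^*(n)$ to form $\C \subseteq [n-2]^{(r-1)}_1$, while $\D = \A^* \setminus (\A^*_n \cup \B) \subseteq [n-1]^{(r)}_1$; the count $|\C| = |\A^*_n \cup \B|$ holds because $1 \notin A$ for every $A \in \A^*(n)$. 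Your opening count of the star, the Pascal identity $\binom{n-r-1}{r-1} = \binom{n-r-2}{r-1} + \binom{n-r-2}{r-2}$, and the base case $n = 2r$ all correctly match the paper's recursion, but without the seam shift $f$ and the family $\B$, neither the intersecting-ness of the reduced family nor the additivity of the count goes through, so the proposal as written is a plan whose two hardest steps are missing.
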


\begin{proof}
We proceed by induction on $n$. The result is trivial if $n \leq 4$ or $r = 1$. Suppose $n \geq 5$ and $r \geq 2$. If $n = 2r$, then there are only two sets in $[n]^{(r)}_1$, and these are disjoint. Suppose $n \geq 2r+1$. Let $\A$ be an intersecting subfamily of $ [n]^{(r)}_1$. 

Let $f: [n]^{(r)}_1 \rightarrow [n]^{(r)}_1$ be the function such that, for each $A \in   [n]^{(r)}_1$, $f(A) = (A \setminus \{n\}) \cup \{n - 1\}$ if $n \in A$ and $n - 2 \not\in A$, and $f(A) = A$ otherwise. Let 
\[\A^* = \{f(A) \colon A \in \A\} \cup \{A \in \A \colon f(A) \in \A\}.\]
Clearly, $|\A^*| = |\A|$. For $\F \subseteq [n]^{(r)}_1$ and $i \in [n]$, let $\F(i) = \{F \setminus \{i\} \colon F \in \F, \, i \in F\}$. Let $\B = \{B \in \A^* \colon 1, \, n-1 \in B\}$, and let $\C = \B(n - 1) \cup \A^*(n)$. 

We claim that $\C$ is intersecting. 
Let $A, B \in \A^*(n)$. Suppose $A \cap B = \emptyset$. Then, $n - 2 \not\in A$ or $n - 2 \not\in B$. We may assume that $n - 2 \not\in A$. Let $A' = A \cup \{n-1\}$ and $B' = B \cup \{n\}$. By the definition of $f$, we have $A', B' \in \A$. However, we also have $A' \cap B' = \emptyset$, which contradicts the assumption that $\A$ is intersecting. Therefore, $\A^*(n)$ is intersecting. Since $\A$ is intersecting and each set in $\A$ is separated, it easily follows that $\C$ is intersecting, as claimed.  

We have $\C \subseteq [n - 2]_1^{(r - 1)}$. By the induction hypothesis, 
\[|\C| \leq \binom{n - r - 2}{r - 2}.\] 
 
Let $\A^*_{n} = \{A \in \A^* \colon n \in A\}$, let $\C^* = \A^*_{n} \cup \B$, and let $\D = \A^* \setminus \C^*$. Then, $\D \subseteq [n - 1]_1^{(r)}$. Since $\A$ is intersecting, $\D$ is intersecting. By the induction hypothesis, 
\[|\D| \leq \binom{n - r - 2}{r - 1}.\]

For each $A \in \A^*(n)$, $A$ is a separated set, and hence $1 \notin A$. Thus, $|\C| = |\B(n - 1)| + |\A^*(n)| = |\C^*|$. We have 
\[|\A| = |\A^*| = |\C^*| + |\D| \leq \binom{n - r - 2}{r - 2} + \binom{n - r - 2}{r - 1} \leq \binom{n - r - 1}{r - 1},\] 
as required. 
\end{proof}

\begin{proof}[Proof of Theorem \ref{thm:main}]
We proceed by induction on $n$. The result is trivial if $n \leq 4$ or $r = 1$. Suppose $n \geq 5$ and $r \geq 2$. If $n = (k + 1)r$, then there are only $k + 1$ sets in $[n]^{(r)}_k$, and these are pairwise disjoint. Suppose $n \geq (k + 1)r + 1$. Let $\A$ be an intersecting subfamily of $ [n]^{(r)}_k$. 

Let $f: [n]^{(r)}_k \rightarrow [n]^{(r)}_k$ be the function such that, for each $A \in [n]^{(r)}_k$, $f(A) = (A \setminus \{n\}) \cup \{n - 1\}$ if $n \in A$ and $n - k - 1 \not\in A$, and $f(A) = A$ otherwise. Let
\[\A^* = \{f(A) \colon A \in \A\} \cup \{A \in \A \colon f(A) \in \A\}.\]
Clearly, $|\A^*| = |\A|$. For $\F \subseteq [n]^{(r)}_k$ and $i \in [n]$, let $\F(i) = \{F \setminus \{i\} \colon F \in \F, \, i \in F\}$. For $i, j \in [n]$, let $\A^*_{i, j} = \{A \in \A^* \colon i, j \in A\}$. Let \[\C =  \A^*(n) \cup \bigcup_{i = 1}^k \A^*_{i, n - k - 1 + i}(n - k - 1 + i).\] 

We claim that $\C$ is intersecting. 
Let $A, B \in \A^*(n)$. Suppose $A \cap B = \emptyset$. Then, $n - k - 1 \not\in A$ or $n - k - 1 \not\in B$. We may assume that $n - k - 1 \not\in A$. Let $A' = A \cup \{n-1\}$ and $B' = B \cup \{n\}$. By the definition of $f$, we have $A', B' \in \A$. However, we also have $A' \cap B' = \emptyset$, which contradicts the assumption that $\A$ is intersecting. Therefore, $\A^*(n)$ is intersecting. Since $\A$ is intersecting and each set in $\A$ is $k$-separated, it easily follows that $\C$ is intersecting, as claimed.  

We have $\C \subseteq [n - k - 1]_k^{(r - 1)}$. By the induction hypothesis, \[ |\C| \leq \binom{n - k - 1 - k(r - 1) - 1}{(r - 1) - 1} = \binom{n - kr - 2}{r - 2}.\]

Let $\A^*_{n} = \{A \in \A^* \colon n \in A\}$, let $\C^* = \A^*_{n} \cup \bigcup_{i = 1}^k \A^*_{i, n - k - 1 + i}$, and let $\D = \A^* \setminus \C^*$. Then, $\D \subseteq [n - 1]_k^{(r)}$. Since $\A$ is intersecting, $\D$ is intersecting. By the induction hypothesis, 
\[|\D| \leq \binom{n - kr - 2}{r - 1}.\]

For each $A \in \A^*(n)$, $A$ is $k$-separated, and hence $A \cap [k] = \emptyset$.  Similarly, for each $i \in [k]$ and each $A \in \A^*_{i, n - k  - 1 + i}(n - k - 1 + i)$, we have $j \not\in A$ for each $j \in [k] \setminus \{i\}$. 
Thus, \[|\C| = |\A^*(n)| + \sum_{i = 1}^k |\A^*_{i, n - k - 1 + i}(n - k - 1 + i)| = |\C^*|.\]
 We have
 \[|\A| = |\A^*| =  |\C^*| + |\D| \leq \binom{n - kr - 2}{r - 2} + \binom{n - kr - 2}{r - 1} = \binom{n - kr - 1}{r - 1},\] as required. 
\end{proof}

\section*{Acknowledgements}
Peter Borg was supported by grant MATRP14-20 of the University of Malta. 
Carl Feghali was supported by grant 19-21082S of the Czech Science Foundation.

\bibliography{bibliography}{}
\bibliographystyle{abbrv}

\end{document}